\font\smallit=cmti10
\font\smalltt=cmtt10
\renewcommand\section{\@startsection {section}{1}{\z@}
{-30pt \@plus -1ex \@minus -.2ex}
{2.3ex \@plus.2ex}
{\normalfont\normalsize\bfseries\boldmath}}
\renewcommand\subsection{\@startsection{subsection}{2}{\z@}
{-3.25ex\@plus -1ex \@minus -.2ex}
{1.5ex \@plus .2ex}
{\normalfont\normalsize\bfseries\boldmath}}
\renewcommand{\@seccntformat}[1]{\csname the#1\endcsname. }
\newtheorem{theorem}{Theorem}
\newtheorem{lemma}{Lemma}
\newtheorem{corollary}{Corollary}
\theoremstyle{definition}
\newtheorem{conjecture}{Conjecture}
\begin{document}

\begin{center}
\uppercase{\bf A note on three consecutive powerful numbers}
\vskip 20pt
{\bf Tsz Ho Chan}\\
{\smallit Department of Mathematics, Kennesaw State University, Marietta, GA, U.S.A.}\\
{\tt tchan4@kennesaw.edu}\\ 
\end{center}
\vskip 20pt
\centerline{\smallit Received: , Revised: , Accepted: , Published: } % We will fill in the dates
\vskip 30pt

\centerline{\bf Abstract}
\noindent
This note concerns the non-existence of three consecutive powerful numbers. We use Pell equations, elliptic curves, and second-order recurrences to show that there are no such triplets with the middle term a perfect cube and each of the other two having only a single prime factor raised to an odd power.

\pagestyle{myheadings}
\markright{\smalltt INTEGERS: 25 (2025)\hfill}
\thispagestyle{empty}
\baselineskip=12.875pt
\vskip 30pt

\section{Introduction and Main Results}

A positive integer $n$ is {\it powerful} or {\it squareful} if $p^2 \mid n$ for all primes $p$ such that $p \mid n$. It is well-known that any powerful number can be factored uniquely as $n = a^2 b^3$ for some positive integer $a$ and squarefree number $b$. Here a number $n$ is {\it squarefree} if $p^2 \nmid n$ for all primes $p$. The following is an initial list of powerful numbers:
\[
1, \, 4, \, 8, \, 9, \, 16, \, 25, \, 27, \, 32, \, 36, \, 49, \, 64, \, 72, \, \ldots .
\]
Notice that $8$ and $9$ are consecutive powerful numbers. Indeed, there are infinitely many such pairs. For example, the solutions of the Pell equation
\[
x^2 - 2 y^2 = 1
\]
give consecutive powerful pairs $2y^2, x^2$ as $y$ is even by considering perfect squares modulo $4$. The interested readers may consult \cite{Go} and \cite{W} for more discussions on this topic. Next, one can ask if there are three consecutive powerful numbers.
\begin{conjecture}[Erd\H{o}s, Mollin, Walsh] \label{conj1}
No three consecutive powerful numbers exist.
\end{conjecture}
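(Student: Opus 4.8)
Suppose, for contradiction, that $n-1$, $n$, $n+1$ are all powerful. The first move I would make is a parity reduction modulo $4$. No powerful number is congruent to $2 \pmod 4$, since $m \equiv 2 \pmod 4$ forces $2 \parallel m$, contradicting powerfulness. If $n$ were odd, its two neighbours $n\pm 1$ would be consecutive even integers, so one of them would be $\equiv 2 \pmod 4$ and could not be powerful. Hence $n$ is even, and being powerful it satisfies $n \equiv 0 \pmod 4$, while $n-1$ and $n+1$ are both odd. This is clean and unconditional, but it does not by itself close the problem: the pair $(25,27)$ of odd powerful numbers differing by $2$ shows that the two outer conditions alone are satisfiable, so the weight of the argument must fall on using all three conditions together.

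For a global bound I would exploit all three powerfulness hypotheses at once through an $abc$-type inequality. Consecutive integers are pairwise coprime, and a powerful $m$ has radical $\mathrm{rad}(m)$ (the product of the primes dividing $m$) bounded by $\mathrm{rad}(m)\le m^{1/2}$, since each prime appears to exponent at least $2$. Therefore $\mathrm{rad}\bigl((n-1)n(n+1)\bigr)\le\sqrt{(n-1)n(n+1)}<n^{3/2}$. Applying the $abc$ conjecture to the coprime relation $(n^2-1)+1=n^2$, whose product satisfies $\mathrm{rad}\bigl((n^2-1)\,n^2\bigr)=\mathrm{rad}\bigl((n-1)n(n+1)\bigr)<n^{3/2}$, yields $n^2\ll_\epsilon n^{(3/2)(1+\epsilon)}$, which forces $n$ to be bounded as soon as $\epsilon<1/3$. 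Conditional on an explicit form of $abc$, one would then be left only with a finite range of $n$ to eliminate by direct computation, completing a fully conditional proof.

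The unconditional route is where the genuine difficulty lies, and it is exactly the Diophantine machinery named in the title that one must substitute for $abc$. Writing each of the three terms in its canonical form $a^2b^3$ with $b$ squarefree turns the simultaneous powerfulness conditions into a system in which, for each fixed choice of the squarefree parts, the constraint collapses to a Pell equation, a model of an elliptic curve, or a second-order linear recurrence, each of which can be analysed and in principle solved. The main obstacle is that the squarefree parts range over an infinite set, producing an infinite family of such equations with no known uniform bound on the heights of their solutions; taming this family uniformly is precisely the content of Conjecture~\ref{conj1} and the reason it remains open. I therefore expect the realistic outcome of this plan to be either a proof conditional on $abc$ or an unconditional resolution restricted to a tractable sub-family of squarefree parts — the latter being the direction the present note pursues.
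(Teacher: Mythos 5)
The statement you were asked about is Conjecture~\ref{conj1}, which is \emph{open}: the paper does not prove it, and your proposal does not either --- indeed your own third paragraph concedes this, so the honest verdict is that there is no complete proof here to certify, and no proof in the paper to compare against. What the paper actually proves is the special case Theorem~\ref{thm1}, where the middle term is a perfect cube and each outer term has exactly one prime raised to an odd (third) power; your closing sentence correctly identifies this as the tractable sub-family the note pursues.

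That said, the partial content of your proposal is sound and lines up with the paper. Your mod~$4$ reduction (no powerful number is $2 \pmod 4$, hence the triple must be $4n-1$, $4n$, $4n+1$) is exactly the opening step of the paper's proof of Theorem~\ref{thm1}. Your $abc$ argument --- applying the conjecture to $(n^2-1)+1=n^2$ with $\mathrm{rad}((n^2-1)n^2) < n^{3/2}$ by coprimality and $\mathrm{rad}(m)\le m^{1/2}$ for powerful $m$, forcing $n^2 \ll_\epsilon n^{(3/2)(1+\epsilon)}$ --- is the standard route behind the paper's one-line remark that $abc$ implies finiteness, and your exponent bookkeeping ($\epsilon < 1/3$) is correct. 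Two caveats you should make explicit rather than glide past: first, the usual $abc$ conjecture has an ineffective constant $C_\epsilon$, so ``eliminate the finite range by computation'' genuinely requires an \emph{explicit} form of $abc$, as you parenthetically note --- without it one gets finiteness but not nonexistence, which is strictly weaker than Conjecture~\ref{conj1}; second, the reduction to Pell equations, elliptic curves, and recurrences in your final paragraph only becomes available after fixing the squarefree parts $b$ in the decompositions $a^2b^3$, and the paper's Theorem~\ref{thm1} handles only the case where the middle squarefree part is $1$ and each outer $b$ is a single prime. Your assessment of why the general case resists (an infinite family of curves with no uniform height bound) is a fair description of the obstruction, but it is commentary, not mathematics that advances the conjecture.
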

This appears to be very hard. Some relevant references are \cite{E}, \cite{MW} and \cite{Gr}. Conditional on the $abc$-conjecture, one can show that there are only finitely many triples of consecutive powerful numbers. In this note, we consider the special case of three consecutive powerful numbers of the form $x^3 - 1$, $x^3$, $x^3 + 1$ and prove the following.
\begin{theorem} \label{thm1}
There are no three consecutive powerful numbers of the form
\begin{equation} \label{main}
x^3 - 1 = p^3 y^2, \; \; x^3, \; \; x^3 + 1 =  q^3 z^2
\end{equation}
with primes $p, q$ and positive integers $x, y, z$.
\end{theorem}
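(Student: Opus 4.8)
The plan is to reduce the problem to a single critical congruence class by elementary factorization arguments, and then to attack that class with a Pell equation and a descent. First I would show that $x$ must be even: if $x$ were odd, then $x^3-1$ and $x^3+1$ would be two consecutive even numbers, one of which is $\equiv 2\pmod 4$ and hence divisible by $2$ but not $4$, contradicting powerfulness. So $x$ is even and $p,q,y,z$ are all odd. I then factor $x^3-1=(x-1)(x^2+x+1)$ and $x^3+1=(x+1)(x^2-x+1)$ and record that each linear factor is coprime to its quadratic companion apart from a possible common factor $3$: the $\gcd$ divides $3$, with equality exactly when $x\equiv 1\pmod 3$ for the first pair and $x\equiv 2\pmod 3$ for the second. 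The key elementary lever is that when the two factors of $p^3y^2$ (resp. $q^3z^2$) are coprime, the unique prime carrying an odd exponent divides exactly one of them, so the other is a perfect square; since $x^2\pm x+1$ lies strictly between consecutive squares for $x\ge 2$, the square must be the linear factor $x\mp 1$.

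This already disposes of two residue classes. If $x\equiv 0\pmod 3$, both pairs are coprime, forcing $x-1$ and $x+1$ to be simultaneously perfect squares, which is impossible since squares cannot differ by $2$. If $x\equiv 1\pmod 3$, the pair for $x^3+1$ is coprime, forcing $x+1$ to be a square, which is impossible because $x+1\equiv 2\pmod 3$ is a non-residue. The remaining and genuinely harder case is $x\equiv 2\pmod 3$: here the coprime pair is the one for $x^3-1$, and it forces $x-1=s^2$ with $s$ odd, while the pair for $x^3+1$ shares the factor $3$, so no immediate contradiction appears.

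In this surviving case I would substitute $x=s^2+1$ and use the factorization $x^2-x+1=s^4+s^2+1=(s^2+s+1)(s^2-s+1)$, giving $x^3+1=(s^2+2)(s^2+s+1)(s^2-s+1)=q^3z^2$, a product of three factors pairwise coprime away from $3$. Tracking the $3$-adic valuations, using $v_3(x^2-x+1)=1$, together with the fact that $q$ is the only prime with an odd exponent, the factor among $s^2\pm s+1$ that is prime to $3$ must be either a perfect square, which is impossible for $s\ge 2$, or $q^3$ times a square. In particular the subcase $q=3$ dies at once, since then no factor can absorb $q$ and one of $s^2\pm s+1$ is forced to be a square. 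In the surviving subcase $q\ne 3$ one obtains one of $s^2\pm s+1=q^3w^2$ together with the other equal to $3$ times a square; completing the square in the latter converts it into a Pell equation of the form $v^2-3u^2=1$, whose solutions form a second-order recurrence parametrizing the admissible values of $s$.

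The hard part, as expected, is the endgame: ruling out that any $s$ in this Pell recurrence simultaneously makes one of $s^2\pm s+1$ a prime cube times a square. Here I would bring in the companion equation coming from $x^3-1$, namely $x^2+x+1=p^3t^2$, that is $s^4+3s^2+3=p^3t^2$, which together with the recurrence over-determines $s$; equivalently, one studies integral points on the Mordell curves $Y^2=X^3-p^3$ and $Z^2=X^3+q^3$ attached to the two original equations via $X=px,\,X=qx$. I expect the decisive step to be a congruence or descent argument showing that the arithmetic of the binary recurrence is incompatible with $s^2-s+1$ (or with $s^4+3s^2+3$) having the rigid shape $\mathrm{prime}^3\times\mathrm{square}$, and this incompatibility is the crux of the whole theorem.
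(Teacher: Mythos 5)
Your reduction is sound and tracks the paper's own strategy for most of its length: the parity argument forcing $x$ even, the factorizations $x^3\mp 1=(x\mp 1)(x^2\pm x+1)$ with $\gcd$ equal to $1$ or $3$, the elimination of $x\equiv 0\pmod 3$ via two squares differing by $2$, and the reduction of $x\equiv 2\pmod 3$ to $x-1=s^2$, $s^4+s^2+1=(s^2+s+1)(s^2-s+1)$, one factor equal to $3\times(\text{square})$ (hence a Pell equation $v^2-3u^2=1$ with $u=(2s\pm1)/3$) and the other equal to $q^3\times(\text{square})$. Two of your observations are genuine improvements: in the case $x\equiv 1\pmod 3$ you note that $x+1\equiv 2\pmod 3$ cannot be a square, which kills that case instantly, whereas the paper runs a longer subcase analysis ending in an elliptic-curve computation; and your pairwise-coprimality bookkeeping disposes of the branch $q^3\mid x+1$ without the second elliptic curve the paper invokes.

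The problem is that you stop exactly where the theorem actually lives. Your last paragraph does not contain an argument: it says you ``expect the decisive step to be a congruence or descent argument,'' and the concrete tools you name would not supply one. The curves $Y^2=X^3-p^3$ and $Y^2=X^3+q^3$ form infinite families indexed by the unknown primes $p,q$, so there is no finite computation of integral points to appeal to, and no congruence on a single binary recurrence will rule out the condition ``$s^2-s+1$ is an unknown prime cubed times a square,'' precisely because the prime is unconstrained. The ingredient you are missing is that, for $q\neq 3$, the factor $x+1$ itself must equal $3v^2$ (its $3$-adic valuation is odd because $3\,\|\,x^2-x+1$ and the total valuation of $q^3z^2$ is even, and all its other prime factors occur to even powers). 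Combined with $x-1=s^2$ this gives a \emph{second} Pell-type equation $s^2-3v^2=-2$, so $s$ is simultaneously a term of the recurrence $u_1=1$, $u_2=5$, $u_k=4u_{k-1}-u_{k-2}$ and of the form $(3h_l\mp 1)/2$ with $h_1=1$, $h_2=4$, $h_l=4h_{l-1}-h_{l-2}$. The paper's endgame is an elementary growth comparison of these two recurrences (which share the characteristic polynomial $t^2-4t+1$), showing $u_k=h_k+h_{k-1}$ is wedged strictly between $h_k$ and $(3h_k-1)/2<u_{k+1}$ for $k\ge 2$, so the two parametrizations meet only at $s=1$, i.e.\ $x=2$, which is not powerful. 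Without that second equation and the interlacing argument, your proof of the critical case $x\equiv 2\pmod 3$ is incomplete.
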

\begin{corollary} \label{cor1}
The Diophantine equation $64 x^6 - 1 = p^3 q^3 y^2$ has no solution with integers $x, y$ and primes $p, q$.
\end{corollary}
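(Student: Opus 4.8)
The plan is to reduce the equation to the configuration that Theorem~\ref{thm1} already forbids. Suppose $64x^6 - 1 = p^3 q^3 y^2$ with primes $p,q$ and integers $x,y$. Since the left side equals $-1$ when $x=0$ and is unchanged under $x \mapsto -x$, and since $y=0$ would force $64x^6 = 1$, I may assume $x \ge 1$ and $y \ge 1$. Setting $u = 2x$, the key observation is the factorization
\[
64 x^6 - 1 = u^6 - 1 = (u^3-1)(u^3+1).
\]
Because $u$ is even, both $u^3-1$ and $u^3+1$ are odd, and as they differ by $2$ they are coprime. Thus the coprime factors $A = u^3-1$ and $B = u^3+1$ satisfy $AB = p^3 q^3 y^2$.

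First I would track how the prime powers distribute over this coprime factorization by comparing squarefree parts (the product of the primes occurring to an odd exponent). For $p \ne q$ the squarefree part of $p^3 q^3 y^2$ is $pq$, while for $p=q$ it is $1$; since $A$ and $B$ are coprime, their squarefree parts are coprime and multiply to this value. This splits the argument into the cases where $\{\mathrm{sqf}(A),\mathrm{sqf}(B)\}$ equals $\{p,q\}$, or $\{1,pq\}$, or (when $p=q$) both are $1$.

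In the principal case $\mathrm{sqf}(A)=p$ and $\mathrm{sqf}(B)=q$, coprimality forces $p\nmid B$ and $q\nmid A$, so $v_p(A) = v_p(AB) = 3 + 2v_p(y) \ge 3$ is odd, and likewise $v_q(B)\ge 3$ is odd. Since every other prime divides $A$ (resp.\ $B$) to an even power, I may write $A = p^3 y_1^2$ and $B = q^3 y_2^2$ with positive integers $y_1,y_2$. That is, $u^3 - 1 = p^3 y_1^2$ and $u^3 + 1 = q^3 y_2^2$, which is exactly the triple excluded by Theorem~\ref{thm1} with $u$ playing the role of $x$ — a contradiction.

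It remains to dispose of the degenerate cases, which is where the real care is needed. In each of them one of the coprime factors is forced to be a perfect square: either $A = u^3-1$ or $B=u^3+1$ (and both when $p=q$). The possibility that $u^3-1$ is a perfect square is killed at once by a congruence, since $u$ even gives $u^3-1 \equiv 7 \pmod 8$, a non-residue. The genuinely delicate case is $u^3+1$ being a perfect square, where congruences do not suffice; I would treat it by descent on $(w-1)(w+1)=u^3$, reducing to the Thue equations $d^3 - 2c^3 = \pm 1$, whose only relevant solutions give $u=2$ — equivalently, by invoking the classical determination of the integral points on the Mordell curve $w^2 = u^3+1$. The single surviving value $u=2$, i.e.\ $x=1$, is then dismissed directly, as $64\cdot 1 - 1 = 63 < 2^6 \le p^3q^3$. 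With every case closed, no solution exists. The main obstacle is thus precisely the elimination of $u^3+1=w^2$; everything else follows cleanly from Theorem~\ref{thm1} together with elementary coprimality and congruence considerations.
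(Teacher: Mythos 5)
Your argument is correct and follows essentially the same route as the paper: factor $64x^6-1=((2x)^3-1)((2x)^3+1)$ into coprime parts, reduce the case where $p$ and $q$ split between the two factors to Theorem \ref{thm1}, and eliminate the degenerate cases where one factor is a perfect square via the integral points of $w^2=u^3\pm 1$. Your one genuine (and welcome) simplification is killing $u^3-1=w^2$ by the congruence $u^3-1\equiv 7 \pmod 8$ instead of citing the Mordell curve $Y^2=X^3-1$; the bookkeeping by squarefree parts is just a repackaging of the paper's divisibility case split.
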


The author hopes that this note will stimulate further studies of powerful numbers and Conjecture \ref{conj1}.

Throughout this paper, all variables are integers. The letters $p$ and $q$ stand for prime numbers. The symbol $a \mid b$ means that $a$ divides $b$, and $a \nmid b$ means that $a$ does not divide $b$. The symbol $p^k \, \| \, a$ means that $p^k \mid a$ but $p^{k+1} \nmid a$.

%-----------------------------------------------------------------------------------------
\section{Some Basic Observations}

\begin{lemma} \label{lem1}
The difference between any two perfect squares cannot be $2$.
\end{lemma}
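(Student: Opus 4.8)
The plan is to argue by contradiction, supposing that two perfect squares differ by $2$, say $a^2 - b^2 = 2$ for some integers $a, b$, and then exhibit a parity obstruction. The cleanest route is to factor the left-hand side as $a^2 - b^2 = (a-b)(a+b) = 2$. First I would observe that $a-b$ and $a+b$ necessarily have the same parity, since their sum $2a$ and their difference $2b$ are both even; hence the two factors are either both even or both odd. If both are even, their product is divisible by $4$ and cannot equal $2$; if both are odd, their product is odd and again cannot equal $2$. This immediately rules out the equation over the integers, and since $a-b, a+b$ range over all integer factor pairs of $2$ (including the negative ones $(-1)(-2)$ and $(-2)(-1)$), no case escapes the parity check.

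An equally short alternative, which I would mention as a remark, is to work modulo $4$: every perfect square is congruent to $0$ or $1 \pmod 4$, so the difference of two squares is congruent to one of $0, 1, 3 \pmod 4$ and never to $2$. Since $2 \equiv 2 \pmod 4$, the equation $a^2 - b^2 = 2$ has no solution. This version has the advantage of not requiring any case analysis over factorizations.

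Honestly, there is no real obstacle here: the statement is elementary and either argument settles it in a line or two. The only point requiring the slightest care is to make sure all integer factorizations of $2$ are covered in the factoring approach (not just the positive ones), which the uniform parity argument handles automatically. I would therefore favor the parity-of-the-two-factors argument for its self-contained simplicity, since it needs neither a table of residues nor an explicit enumeration of sign cases.
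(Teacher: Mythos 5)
Your proof is correct, but it takes a different route from the paper's. You factor $a^2-b^2=(a-b)(a+b)=2$ and observe that the two factors have the same parity, so their product is either odd or divisible by $4$, never $2$; your mod-$4$ remark is an equivalent repackaging of the same obstruction. The paper instead uses a gap argument: the difference between consecutive squares is $(n+1)^2-n^2=2n+1$, which equals $1$ for $n=0$ and is at least $3$ for $n\ge 1$, so no two squares can differ by exactly $2$. Both are one-line arguments; yours has the advantage of proving the slightly stronger fact that a difference of two squares is never congruent to $2$ modulo $4$, while the paper's shows that the only difference of distinct squares smaller than $3$ is $1$. Either works perfectly well for the application in the paper (ruling out $x-1$ and $x+1$ both being squares), and your handling of negative factorizations via the uniform parity argument is sound.
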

\begin{proof} 
One simply observes that $(n+1)^2 - n^2 = 2n + 1 > 2$ when $n \ge 1$, and $(0 + 1)^2 - 0^2 = 1 \neq 2$.
\end{proof}

\begin{lemma} \label{lem2}
If $x \equiv 1 \pmod{3}$, then $3 \, \| \, x^2 + x + 1$.
\end{lemma}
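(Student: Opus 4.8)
The plan is to reduce the claim to a direct computation by substituting the hypothesis. Since $x \equiv 1 \pmod 3$, I would write $x = 3k+1$ for some integer $k \ge 0$ and expand $x^2 + x + 1$ in terms of $k$. The symbol $3 \, \| \, x^2 + x + 1$ packages two separate assertions, so I would verify them in turn: that $3 \mid x^2 + x + 1$, and that $9 \nmid x^2 + x + 1$.

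For the divisibility by $3$, the substitution gives
\[
x^2 + x + 1 = (3k+1)^2 + (3k+1) + 1 = 9k^2 + 9k + 3,
\]
which is visibly a multiple of $3$. For the non-divisibility by $9$, I would read off from the same expansion that
\[
x^2 + x + 1 = 9(k^2 + k) + 3 \equiv 3 \pmod 9,
\]
so $9$ cannot divide it. Together these two observations yield exactly $3 \, \| \, x^2 + x + 1$, which is the assertion of the lemma.

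There is really no substantive obstacle here: once the congruence condition is turned into the substitution $x = 3k+1$, both halves of the claim fall out of a single one-line expansion. If one preferred to avoid the substitution, an equivalent route would be to check directly that every residue class with $x \equiv 1 \pmod 3$, namely $x \equiv 1, 4, 7 \pmod 9$, satisfies $x^2 + x + 1 \equiv 3 \pmod 9$; but the algebraic expansion is cleaner and makes the exact power of $3$ transparent.
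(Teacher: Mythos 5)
Your proof is correct and follows essentially the same route as the paper: substitute $x = 3k+1$, expand to get $9k^2 + 9k + 3$, and observe this is divisible by $3$ but congruent to $3 \pmod 9$. The only cosmetic difference is that the restriction $k \ge 0$ is unnecessary (the computation works for any integer $k$), but this does not affect the argument.
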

\begin{proof}
Suppose $x \equiv 1 \pmod{3}$. Say $x = 3 x' + 1$ for some integer $x'$. Then
\[
x^2 + x + 1 = (3 x' + 1)^2 + (3 x' + 1) + 1 = 9 x'^2 + 9 x' + 3 = 3 (3 x'^2 + 3 x' + 1)
\]
which is divisible by $3$ but not $9$.
\end{proof}

\begin{lemma} \label{lem3}
If $x \equiv 2 \pmod{3}$, then $3 \, \| \, x^2 - x + 1$.
\end{lemma}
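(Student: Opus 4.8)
The plan is to mirror the direct computation used for Lemma~\ref{lem2}. First I would use the hypothesis $x \equiv 2 \pmod{3}$ to write $x = 3x' + 2$ for some integer $x'$, and then substitute this into the quadratic $x^2 - x + 1$. Expanding and collecting terms should give
\[
x^2 - x + 1 = (3x'+2)^2 - (3x'+2) + 1 = 9x'^2 + 9x' + 3 = 3(3x'^2 + 3x' + 1),
\]
which already shows that $3 \mid x^2 - x + 1$. To upgrade this to the exact-divisibility claim $3 \, \| \, x^2 - x + 1$, I would note that the cofactor satisfies $3x'^2 + 3x' + 1 \equiv 1 \pmod{3}$, hence is not divisible by $3$, so $9 \nmid x^2 - x + 1$.

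Alternatively, and perhaps more cleanly, the statement reduces at once to Lemma~\ref{lem2} via the substitution $x \mapsto -x$. Indeed $x^2 - x + 1 = (-x)^2 + (-x) + 1$, and if $x \equiv 2 \pmod{3}$ then $-x \equiv 1 \pmod{3}$; applying Lemma~\ref{lem2} with $-x$ in place of $x$ then yields $3 \, \| \, x^2 - x + 1$ with no further computation. This makes transparent why Lemmas~\ref{lem2} and~\ref{lem3} are the two halves of a single symmetry.

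I do not anticipate any genuine obstacle: this is a one-line modular arithmetic fact. The only point requiring a little care is the non-divisibility half of the $\|$ assertion, where one must actually examine the cofactor $3x'^2 + 3x' + 1$ modulo $3$ rather than merely exhibiting a single factor of $3$.
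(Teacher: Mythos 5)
Your proposal is correct, and your second argument (applying Lemma~\ref{lem2} to $-x$, noting $-x \equiv 1 \pmod{3}$) is exactly the paper's proof; the direct expansion you give first is a valid alternative but adds nothing beyond that reduction. Both halves of the exact-divisibility claim are handled properly.
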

\begin{proof}
This follows from Lemma \ref{lem2} by the substitution $x \mapsto - x$.
\end{proof}

\begin{lemma} \label{lem4}
Suppose $a \neq 0$, $c$ and $e$ are some fixed integers. If $y^2 = a x^4 + c x^2 + e$ has an integer solution $(x, y)$ with $x \neq 0$, so does the elliptic curve $Y^2 = X^3 + c X^2 + a e X$.
\end{lemma}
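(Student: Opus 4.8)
The plan is to exhibit an explicit change of variables that sends the given quartic curve to the stated Weierstrass model. Since $a \neq 0$, I would first multiply the relation $y^2 = a x^4 + c x^2 + e$ by $a$, rewriting the right-hand side as a quadratic in the quantity $a x^2$:
\[
a y^2 = (a x^2)^2 + c (a x^2) + a e.
\]
This already hints at the substitution $X := a x^2$, under which the equation becomes $a y^2 = X^2 + c X + a e$.

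The next step is to clear the remaining factor of $a$ by multiplying through by $X = a x^2$. On the left this produces $a x^2 \cdot a y^2 = a^2 x^2 y^2 = (a x y)^2$, while on the right it produces $X (X^2 + c X + a e) = X^3 + c X^2 + a e X$. Hence, setting $Y := a x y$, I obtain exactly
\[
Y^2 = X^3 + c X^2 + a e X.
\]

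It then remains only to verify integrality and that the point obtained is genuine. Because $a$, $x$, $y$ are integers, both $X = a x^2$ and $Y = a x y$ are integers, so $(X, Y)$ is an integer point on $Y^2 = X^3 + c X^2 + a e X$, as required. (Moreover $X = a x^2 \neq 0$ since $a \neq 0$ and $x \neq 0$, so the point is not the trivial one at the origin, though the statement asks only for an integer solution.) I do not expect a serious obstacle here: the entire content is discovering the substitution $X = a x^2$, $Y = a x y$, and the only step needing a little care is to multiply by $a$ first, so that the leading term $a x^4$ becomes the perfect square $(a x^2)^2$, before multiplying by $X$ to reach the cubic.
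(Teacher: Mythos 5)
Your proof is correct and is essentially identical to the paper's: both multiply the quartic relation by $a^2 x^2$ (you just do it in two stages) and arrive at the same substitution $X = a x^2$, $Y = a x y$. No issues.
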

\begin{proof}
One simply multiplies both sides of $y^2 = a x^4 + c x^2 + e$ by $a^2 x^2$ and gets $(a x y)^2 = (a x^2)^3 + c (a x^2)^2 + a e (a x^2)$. This yields a non-zero integer solution for the above elliptic curve with $X = a x^2 \neq 0$ and $Y = a x y$.
\end{proof}

\begin{lemma} \label{lem5}
The Pell equation $x^2 - 3 y^2 = 1$ has all positive integer solutions $(x_k, y_k)$ generated by $x_k + y_k \sqrt{3} = (2 + 1 \cdot \sqrt{3})^k$ for $k \in \mathbb{N}$. Moreover, the solutions satisfy the recursions: $x_1 = 2$, $x_2 = 7$, $x_{k} = 4 x_{k - 1} - x_{k - 2}$; $y_1 = 1$, $y_2 = 4$, $y_{k} = 4 y_{k - 1} - y_{k - 2}$ for $k > 2$.
\end{lemma}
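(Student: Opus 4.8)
The plan is to combine the standard structure theory of Pell equations with the observation that the conjugate quantities $2 \pm \sqrt{3}$ are the two roots of a single quadratic, which converts the multiplicative generation of solutions into a linear recurrence. First I would verify that $(x_1, y_1) = (2, 1)$ is a solution, since $2^2 - 3 \cdot 1^2 = 1$, and that it is the fundamental (least positive) solution by checking directly that no positive solution with smaller entries exists. The general theory of Pell equations then guarantees that every positive integer solution has the form $x_k + y_k \sqrt{3} = (2 + \sqrt{3})^k$ for some $k \in \mathbb{N}$. This is where the only real content lies, and for a note of this kind I would simply cite the standard descent argument: any putative solution lying strictly between two consecutive powers of the fundamental unit can be divided by $2 + \sqrt{3}$ to produce a smaller positive solution, contradicting minimality.

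Granting this, the first-order recursions follow by writing $(2 + \sqrt{3})^k = (2 + \sqrt{3})(x_{k-1} + y_{k-1}\sqrt{3})$ and matching the rational and irrational parts, which gives $x_k = 2 x_{k-1} + 3 y_{k-1}$ and $y_k = x_{k-1} + 2 y_{k-1}$. To obtain the claimed homogeneous second-order recursion I would set $\alpha = 2 + \sqrt{3}$ and $\beta = 2 - \sqrt{3}$ and note that $\alpha + \beta = 4$ and $\alpha \beta = 1$, so both are roots of $t^2 - 4t + 1 = 0$. Taking the conjugate of the defining relation gives $x_k - y_k \sqrt{3} = \beta^k$, whence the Binet-type formulas
\[
x_k = \frac{\alpha^k + \beta^k}{2}, \qquad y_k = \frac{\alpha^k - \beta^k}{2\sqrt{3}}.
\]
Since $\alpha^2 = 4\alpha - 1$ and likewise $\beta^2 = 4\beta - 1$, multiplying by $\alpha^{k-2}$ and $\beta^{k-2}$ respectively yields $\alpha^k = 4\alpha^{k-1} - \alpha^{k-2}$ and $\beta^k = 4\beta^{k-1} - \beta^{k-2}$, so any fixed linear combination of $\alpha^k$ and $\beta^k$ obeys the same relation. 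Hence both $x_k$ and $y_k$ satisfy $u_k = 4 u_{k-1} - u_{k-2}$. It then remains only to record the initial values $(x_1, y_1) = (2, 1)$ and, from $(2 + \sqrt{3})^2 = 7 + 4\sqrt{3}$, $(x_2, y_2) = (7, 4)$.

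The only genuine obstacle is the structure theorem asserting that powers of the fundamental solution exhaust all positive solutions; everything after that is bookkeeping. In a fully self-contained treatment the descent step would require some care with the ordering of solutions (checking that larger $k$ corresponds to larger $x_k$ and $y_k$, so that the minimality argument is well posed), but since this fact is entirely classical I expect it to be quoted rather than reproved.
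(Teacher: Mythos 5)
Your proposal is correct and follows essentially the same route as the paper: cite the classical structure theorem for the fundamental solution $(2,1)$, then derive the second-order recursion from the fact that $2+\sqrt{3}$ (and its conjugate) satisfies $t^2-4t+1=0$. The paper applies that quadratic relation directly to the sequence $(2+\sqrt{3})^k$ and reads off the components, while you route through the Binet formulas; this is only a cosmetic difference.
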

\begin{proof}
This is a standard result in the theory of Pell equation that all integer solutions are generated by some fundamental (minimal) solution. See \cite[Theorem 5.3]{C1} for example. The recursions easily follow from the observation
\[
(2 + \sqrt{3})^{k} - 4 (2 + \sqrt{3})^{k-1} + (2 + \sqrt{3})^{k-2} = (2 + \sqrt{3})^{k-2} [ (2 + \sqrt{3})^2 - 4 (2 + \sqrt{3}) + 1 ] = 0
\]
as $2 + \sqrt{3}$ is a root to the quadratic equation $x^2 - 4 x + 1 = 0$.
\end{proof}

\begin{lemma} \label{lem6}
The generalized Pell equation $x^2 - 3y^2 = -2$ has all positive integer solutions $(x_k, y_k)$ generated by $x_k + y_k \sqrt{3} = (1 + 1 \cdot \sqrt{3}) (2 + 1 \cdot \sqrt{3})^k$ for $k \in \mathbb{N}$. Moreover, the solutions satisfy the recursions: $x_1 = 1$, $x_2 = 5$, $x_{k} = 4 x_{k - 1} - x_{k - 2}$; $y_1 = 1$, $y_2 = 3$, $y_{k} = 4 y_{k - 1} - y_{k - 2}$ for $k > 2$.
\end{lemma}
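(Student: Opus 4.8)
The plan is to mirror the proof of Lemma~\ref{lem5}, the one new ingredient being the structure theory for the \emph{generalized} Pell equation $x^2 - Dy^2 = N$ with $N \neq 1$. First I would record the two elementary facts on which everything rests: that $(x,y) = (1,1)$ is a solution, since $1^2 - 3\cdot 1^2 = -2$, and that $2 + \sqrt{3}$ is the fundamental solution of the associated equation $x^2 - 3y^2 = 1$, supplied by Lemma~\ref{lem5}. Because the norm form $a^2 - 3b^2 = N(a + b\sqrt{3})$ is multiplicative and $N(2 + \sqrt{3}) = 1$, multiplying any solution by $2 + \sqrt{3}$ returns a solution; hence every $(1 + \sqrt{3})(2 + \sqrt{3})^k$ gives a solution $(x_k, y_k)$, and a short expansion yields the first values $(1,1), (5,3), (19,11), \dots$.

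The crux is to show that these exhaust the positive solutions. I would invoke the standard fact (see \cite{C1}) that the solutions of $x^2 - 3y^2 = -2$ split into finitely many classes, each generated from a fundamental solution by the unit $2 + \sqrt{3}$, together with the classical bound that a fundamental solution satisfies $0 \le |x| \le \sqrt{(u-1)|N|/2}$ and $0 \le y \le v\sqrt{|N|}/\sqrt{2(u-1)}$, where $(u,v) = (2,1)$ is the fundamental solution of $x^2 - 3y^2 = 1$. Here both bounds evaluate to $1$, so a fundamental solution has $y = 1$ and $|x| \le 1$; the case $x = 0$ is impossible and $x = 1$ forces $y = 1$, leaving only $(1,1)$ and its partner $(-1,1)$. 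Since $-1 + \sqrt{3} = (1 + \sqrt{3})(2 - \sqrt{3})$ differs from $1 + \sqrt{3}$ by the unit $2 - \sqrt{3} = (2+\sqrt{3})^{-1}$, both belong to one and the same class; restricting to $x, y > 0$ then leaves exactly the quantities $(1+\sqrt{3})(2+\sqrt{3})^k$ with $k \ge 0$, as claimed.

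The recursions then follow verbatim from the argument in Lemma~\ref{lem5}: as $2 + \sqrt{3}$ is a root of $t^2 - 4t + 1 = 0$, the combination $(1+\sqrt{3})(2+\sqrt{3})^{k} - 4(1+\sqrt{3})(2+\sqrt{3})^{k-1} + (1+\sqrt{3})(2+\sqrt{3})^{k-2}$ vanishes, and comparing the rational and $\sqrt{3}$-components shows that both $x_k$ and $y_k$ satisfy $z_k = 4 z_{k-1} - z_{k-2}$; the initial data $x_1 = 1$, $x_2 = 5$, $y_1 = 1$, $y_2 = 3$ come from direct expansion. I expect the only real obstacle to be the single-class verification of the middle paragraph, since the equation has $N = -2 \neq 1$ and a priori could carry more than one orbit of solutions; once the bound pins the fundamental solution down to $(1,1)$, the remainder is mechanical.
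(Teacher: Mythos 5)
Your proposal is correct and takes essentially the same route as the paper: identify $(1,1)$ as the fundamental solution, generate all solutions by multiplying by the unit $2+\sqrt{3}$ from Lemma \ref{lem5}, and read off the recursion from the minimal polynomial $t^2-4t+1$. The only difference is one of detail: the paper delegates the fact that all solutions lie in a single orbit to a citation of \cite[Theorem 3.3]{C2}, whereas you verify it explicitly via the classical bound on fundamental solutions of $x^2-3y^2=-2$ and the observation that $(1,1)$ and $(-1,1)$ are associates, which is a welcome (and correct) elaboration rather than a different argument.
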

\begin{proof}
One can easily see that $x = 1 = y$ is the smallest positive integer solution (i.e., $x_1 + y_1 \sqrt{3} = 1 + 1 \cdot \sqrt{3}$). Then one can generate all the integer solutions by combining this with the solutions in Lemma \ref{lem5}. See \cite[Theorem 3.3]{C2} for example. The recursions follow from a similar observation as in Lemma \ref{lem5}.
\end{proof}

%---------------------------------------------------------------------------------------------------------
\section{Proof of Theorem \ref{thm1}}

We are now ready to prove Theorem \ref{thm1}.

\begin{proof}[Proof of Theorem \ref{thm1}]
First, note that any three consecutive powerful numbers must be of the form $4n - 1$, $4n$, $4n + 1$ as $2 \, \| \, 4n + 2$. Recall that the middle number is a cube. So, $4n = x^3$ and $x$ is even. Suppose that, contrary of Theorem \ref{thm1}, we have
\begin{equation} \label{main1}
(x - 1) (x^2 + x + 1) = p^3 y^2 \; \text{ and } \; (x+1) (x^2 - x + 1) = q^3 z^2.
\end{equation}
Note that $x > 1$ as $p, y > 0$. By the fact that $\gcd(a, b) = \gcd(a, b - a)$, we have $\gcd(x - 1, x^2 + x + 1) = \gcd(x - 1, (x - 1)(x + 2) + 3) = \gcd(x - 1, 3) = 1$ or $3$.
Hence,
\begin{equation} \label{gcd1}
\gcd(x - 1, x^2 + x + 1) = \left\{ \begin{array}{ll} 3, & \text{ if } x \equiv 1 \pmod{3}, \\
1, & \text{ otherwise}. \end{array} \right.
\end{equation}

Suppose $x \not\equiv 1 \pmod{3}$. Then $\gcd(x - 1, x^2 + x + 1) = 1$ by Equation \eqref{gcd1}. It follows that if $p \mid x - 1$, then $p^3 \mid x - 1$ and $x^2 + x + 1$ is a perfect square by Equation \eqref{main1}. This is impossible as $x^2 < x^2 + x + 1 < (x + 1)^2$. Hence, $p^3 \mid x^2 + x + 1$. In summary, we have
\begin{equation} \label{imply1}
x \not\equiv 1 \pmod{3} \; \text{ implies } \; p^3 \mid x^2 + x + 1 \text{ and } x - 1 \text{ is a perfect square}.
\end{equation}
Applying the substitution $x \mapsto -x$ to the above argument, we also have
\begin{equation} \label{gcd2}
\gcd(x + 1, x^2 - x + 1) = \left\{ \begin{array}{ll} 3, & \text{ if } x \equiv 2 \pmod{3}, \\
1, & \text{ otherwise}, \end{array} \right.
\end{equation}
and
\begin{equation} \label{imply2}
x \not\equiv 2 \pmod{3} \; \text{ implies } \; q^3 \mid x^2 - x + 1 \text{ and } x + 1 \text{ is a perfect square}.
\end{equation}
\noindent{\tt Case 1:} $x \equiv 0 \pmod 3$. From Equations \eqref{gcd1} and \eqref{gcd2}, we have
\[
\gcd( x - 1, x^2+x+1 ) = 1 = \gcd(x+1, x^2-x+1).
\]
By Equations \eqref{imply1} and \eqref{imply2}, both $x - 1$ and $x + 1$ are perfect squares which contradicts Lemma \ref{lem1}. 

\noindent{\tt Case 2:} $x \equiv 1\pmod 3$. From Equations \eqref{gcd1} and \eqref{gcd2}, we have
\[
\gcd(x - 1, x^2+x+1) = 3 \; \text{ and } \; \gcd(x+1, x^2-x+1) = 1.
\]
Hence, $q^3 \mid x^2 - x + 1$ and $x + 1 = u^2$ for some integer $u > 0$ by Equation \eqref{imply2}.

Suppose $p = 3$. We have $(x - 1)(x^2 + x + 1) = 3^3 y^2$ and $3 \, \| \, x^2 + x + 1$ by Lemma \ref{lem2}. Thus, $9 \mid x - 1$ and $x - 1 = 9 x'^2 = (3 x')^2$ for some integer $x'$. This contradicts Lemma \ref{lem1}.

Suppose $p \neq 3$. If $p \mid x^2 + x + 1$, then $p^3 \mid x^2 + x + 1$ by Equation \eqref{gcd1}. This forces $x - 1 = 3 v^2$ or $x - 1 = (3 v)^{2}$ for some integer $v$. The latter contradicts Lemma \ref{lem1}. The former yields $u^2 - 3v^2 = 2$ which is also impossible as $u^2 \equiv 0$ or $1 \pmod{3}$.

Therefore, we must have $p \mid x - 1$ and, hence, $p^3 \mid x - 1$ by Equation \eqref{gcd1}. From Equation \eqref{main1} and Lemma \ref{lem3}, we have $x^2 + x + 1 = 3 v^2$ and $x - 1 = 3 p^3 w^2$ for some integers $v, w > 0$ with $\gcd(v, 3) = 1 = \gcd(v, w)$. Substituting $x = u^2 - 1$ into $x^2 + x + 1$, we obtain
\[
3 v^2 = u^4 - u^2 + 1 \; \; \text{ or } \; \; y^2 = 3 u^4 - 3 u^2 + 3
\]
where $y = 3 v$. By Lemma \ref{lem4}, the elliptic curve $Y^2 = X^3 - 3 X^2 + 9 X$ has a non-zero integer solution. This contradicts $(X, Y) = (0, 0)$ being the only integer solution by the SageMath command
\[
\text{E = EllipticCurve([0,-3,0,9,0]); E.integral\_points()},
\]
for example.

\noindent{\tt Case 3:} $x \equiv 2 \pmod 3$. From Equations \eqref{gcd1} and \eqref{gcd2}, we have
\[
\gcd(x - 1, x^2+x+1) = 1, \; \text{ and } \; \gcd( x + 1, x^2 - x + 1) =  3.
\]
Hence, $p^3 \mid x^2 + x + 1$ and $x - 1 = u^2$ for some integer $u > 0$ with $3 \nmid u$ by Equation \eqref{imply1}.

Suppose $q = 3$. We have $(x + 1)(x^2 - x + 1) = 3^3 y^2$ and $3 \, \| \, x^2 - x + 1$ by Lemma \ref{lem3}. Thus, $9 \mid x + 1$ and $x + 1 = 9 x'^2 = (3 x')^2$ for some integer $x'$. This contradicts Lemma \ref{lem1}.

Suppose $q \neq 3$. We have either $q^3 \mid x+1$ or $q^3 \mid x^2-x+1$ by Equation \eqref{gcd2}. If the former is true, then $x + 1 = 3 q^3 v^2$ and $x^2 - x + 1 = 3 w^2$ for some integers $v, w > 0$ with $\gcd(w, 3) = 1 = \gcd(v, w)$. Substituting $x = u^2 + 1$ into $x^2 - x + 1$, we obtain
\[
3 w^2 = u^4 + u^2 + 1 \; \; \text{ or } \; \; y^2 = 3 u^4 + 3 u^2 + 3
\]
with $y = 3w$. By Lemma \ref{lem4}, the elliptic curve $Y^2 = X^3 + 3 X^2 + 9 X$ has a non-zero integer solution. The SageMath command
\[
\text{E = EllipticCurve([0,3,0,9,0]); E.integral\_points()}
\]
yields $(X, Y) = (0, 0)$ and $(3, \pm 9)$ as the only such solutions. It follows from the proof in Lemma \ref{lem4} that $X = 3 u^2 = 3$ and $u = 1$. Hence, $x = u^2 + 1 = 2$ but $x^3 - 1 = 7$ is not powerful.

Therefore, we must have $q^3 \mid x^2 - x + 1$. By Equation \eqref{gcd2}, we have $x+1 = 3 v^2$ and $x^2 - x + 1 = 3 q^3 w^2$ for some integers $v$ and $w$. Combining these with $x - 1 = u^2$, we get the generalized Pell equation $u^2 - 3 v^2 = -2$. By Lemma \ref{lem6}, $u = u_k$ satisfies
\begin{equation} \label{u-recur}
u_1 = 1, \; \; u_2 = 5, \; \text{ and  } \; u_{k} = 4 u_{k-1} - u_{k-2} \; \text{ for } \; k > 2.
\end{equation}
By substituting $x = u^2 + 1$ into $x^2 - x + 1$, we obtain
\[
3 q^3 w^2 = (u^2+1)^2 - (u^2+1) + 1 = u^4 + u^2 + 1 = (u^2 + u + 1)(u^2 - u + 1).
\]
Since $x$ is even and $x - 1 = u^2$, $u$ is odd and
\begin{equation} \label{gcd3}
\gcd (u^2 + u + 1, u^2 - u + 1) = \gcd(u^2 + u + 1, 2u) = \gcd(u^2 + u + 1, u) = 1.
\end{equation}
\noindent{\tt Subcase 1:} $u \equiv 1 \pmod{3}$. Then $3 \mid u^2 + u + 1$. Suppose $q \mid u^2 + u + 1$. We have $u^2 + u + 1 = 3 q^3 w_1^2$ and $u^2 - u + 1 = w_2^2$ by Equation \eqref{gcd3}. This is impossible as $(u - 1)^2 < u^2 - u + 1 < u^2$ unless $u = 1$. However, $u = 1$ yields $x = 2$ and $x^3 - 1 = 7$ which is not powerful. Therefore, we must have $q \mid u^2 - u + 1$. So, $u^2 + u + 1 = 3 w_1^2$ and $u^2 - u + 1 = q^3 w_2^2$ by Equation \eqref{gcd3}. After some algebra, one arrives at
\[
(2w_1)^2 - 3 \Bigl( \frac{2u+1}{3} \Bigr)^2 =  1.
\]
Then Lemma \ref{lem5} gives $2w_1 + \frac{2u+1}{3} \sqrt{3} = g_l + h_l \sqrt{3} = (2 + \sqrt{3})^l$ where
\begin{equation} \label{h-recur}
h_1 = 1, \; \; h_2 = 4, \; \text{ and } \; h_{l} = 4 h_{l-1} - h_{l-2} \; \text{ for } \; l > 2.
\end{equation}
Thus, $u_k = u = \frac{3 h_{l} - 1}{2}$ for some indices $k, l \ge 1$.

From Equations \eqref{u-recur} and \eqref{h-recur}, one can show by induction that $u_k$ and $h_l$ are positive increasing sequences (for example, $u_2 > u_1 > 0$ and the induction hypothesis $u_{k-1} > u_{k-2} > 0$ implies $u_{k} = 4 u_{k-1} - u_{k-2} > u_{k-1} + 3 (u_{k-1} - u_{k-2}) > u_{k-1} > 0$). Hence,
\begin{equation} \label{increase}
u_k = 4 u_{k-1} - u_{k-2} > 4 u_{k-1} - u_{k-1} = 3 u_{k-1}, \text{ and, similarly,  } h_l > 3 h_{l-1}.
\end{equation}
Moreover, one can form the new sequence $v_k := u_k - h_k$ which satisfies
\[
v_1 = 0, \; \; v_2 = 1, \; \text{ and } \; v_{k} = 4 v_{k-1} - v_{k-2} \; \text{ for } \; k > 2.
\]
As $v_3 = 4$, one can see that $v_k = h_{k-1} > 0$ for $k \ge 2$. Hence, $u_k = h_k + h_{k-1} > h_k$ for all $k \ge 2$. By Equation \eqref{increase} and the inequality $\frac{4n}{3} < \frac{3n-1}{2}$ when $n \ge 4$, we have
\[
u_k = h_k + h_{k-1} < \frac{4 h_k}{3} < \frac{3 h_{k} - 1}{2} < \frac{3 u_k - 1}{2} < 3 u_k <  u_{k+1}
\]
for all $k \ge 2$. Therefore, $u_k = \frac{3 h_{l} - 1}{2}$ is possible only when $k = l = 1$. This gives $x = u_1^2 + 1 = 2$ but $x^3 - 1 = 7$ is not powerful.

\noindent{\tt Subcase 2:} $u \equiv -1 \pmod{3}$. This is very similar to subcase 1 with $3 \mid u^2 - u + 1$ and $(2 w_1)^2 - 3 (\frac{2u - 1}{3})^2 = 1$ instead. It also yields a contradiction.
\end{proof}
%-------------------------------------------------------------------------------------------------
\section{Proof of Corollary \ref{cor1}}

Using Theorem \ref{thm1}, we can now prove Corollary \ref{cor1}.
\begin{proof}[Proof of Corollary \ref{cor1}]
Suppose the equation $64 x^6 - 1 = ((2x)^3 - 1) ((2x)^3 + 1) = p^3 q^3 y^2$ has a solution with some integers $x, y$, and primes $p, q$. By the fact that $\gcd(a, b) = \gcd(a, b - a)$, we have
\begin{equation} \label{gcdlast}
\gcd((2x)^3 - 1, (2x)^3 + 1) = \gcd((2x)^3 - 1, 2) = 1.
\end{equation}

Suppose $p q \mid (2x)^3 - 1$. By Equation \eqref{gcdlast}, we must have $(2x)^3 - 1 = p^3 q^3 y_1^2$ and $(2x)^3 + 1 = y_2^2$ for some integers $y_1$ and $y_2$. However, the elliptic curve $Y^2 = X^3 + 1$ has $(X, Y) = (-1, 0)$, $(0, \pm 1)$ and $(2, \pm 3)$ as its only integer solutions by SageMath for example. Thus, $x = 0$ or $1$. However, neither $0^6 - 1 = -1$ nor $2^6 - 1 = 63$ are of the form $p^3 q^3 y_1^2$.

Suppose $p q \mid (2x)^3 + 1$. By Equation \eqref{gcdlast}, we must have $(2x)^3 + 1 = p^3 q^3 y_1^2$ and $(2x)^3 - 1 = y_2^2$ for some integers $y_1$ and $y_2$. This contradicts $(X,Y) = (1,0)$ being the only solution to the elliptic curve $Y^2 = X^3 - 1$ (see \cite[Theorem 3.2]{C3} for example).

Therefore, $p$ divides exactly one of $(2x)^3 - 1$ or $(2x)^3 + 1$, and $q$ divides the other one. Without loss of generality, say $(2x)^3 - 1 = p^3 y_1^2$ and $(2x)^3 + 1 = q^3 y_2^2$. This contradicts Theorem \ref{thm1}. Consequently, $64 x^6 - 1 = p^3 q^3 y^2$ cannot have any integer solution.
\end{proof}
\vskip 20pt\noindent {\bf Acknowledgement.} The author would like to thank the anonymous referee and the managing editor Bruce Landman for some helpful suggestions.

%---------------------------------------------------------------------------------------------------------

\end{document}